\newtheorem{theorem}{Theorem}[section]
\newtheorem{lemma}[theorem]{Lemma}
\newtheorem{corollary}[theorem]{Corollary}
\newtheorem{proposition}[theorem]{Proposition}
\theoremstyle{definition}
\newtheorem{definition}[theorem]{Definition}
\newtheorem{example}[theorem]{Example}
\newtheorem{question}[theorem]{Question}
\theoremstyle{remark}
\newcommand{\R}{\mathbb{R}}
\newcommand{\C}{\mathbb{C}}
\newcommand{\K}{\mathbb{K}}
\newcommand{\lowK}{\mathbbm{k}}
\numberwithin{equation}{section}
\begin{document}

\title[Extension of Local Maps. Applications and Examples]{A New Method of Extension of Local Maps of Banach Spaces. Applications and Examples}


\author[Genrich Belitskii]{Genrich Belitskii}
\address{Department of Mathematics and Computer Science\\ Ben Gurion University of the Negev\\ P.O.B. 653\\ Beer Sheva, 84105\\ Israel}
\email{genrich@cs.bgu.ac.il}
\thanks{The authors thank the referees for helpful suggestions.}

\author[Victoria Rayskin]{Victoria Rayskin}
\address{Department of Mathematics\\ Tufts University\\ Medford, MA 02155-5597}
\email{victoria.rayskin@tufts.edu}

\subjclass[2010]{Primary 26E15; Secondary 46B07, 58Bxx}

\date{}

\begin{abstract}
A known classical method of extension of smooth local maps of Banach spaces uses smooth bump functions. However, such functions are absent in the majority of infinite-dimensional Banach spaces. This is an obstacle in the development of local analysis, in particular in the questions of extending local maps onto the whole space. We suggest an approach that substitutes bump functions with special maps, which we call blid maps. It allows us to extend smooth local maps from non-smooth spaces, such as $C^q[0,1], q=0,1,...$. As an example of applications, we show how to reconstruct a map from its derivatives at a point, for spaces possessing blid maps. We also show how blid maps can assist in finding global solutions to cohomological equations having linear transformation of argument.

\end{abstract}

\maketitle

\section{Introduction}\label{sec-intro}
With the advancement of dynamical systems and analysis, the complexity of global analysis became evident. This stimulated the development of techniques for the study of local properties of a global problem. One of the methods of localization is based on the functions with bounded support. The history of applications of functions vanishing outside of a bounded set goes back to the works of Sobolev (\cite{S}) on generalized functions.\footnote{Sobolev was a student of N.M. Guenter, and this work was probably influenced by Guenter. However, Guenter was accused in the development of "abstract" science at the time when the USSR was desperate for an applied theory for the creation of atomic weapons. Guenter was forced to resign from his job.} Later,  functions with bounded support were used by Kurt Otto Friedrichs in his paper of 1944. His colleague, Donald Alexander Flanders, suggested the name mollifiers. Friedrichs himself acknowledged Sobolev's work on mollifiers stating that: "These mollifiers were introduced by Sobolev and the author". A special type of mollifier, which is equal to 1 in the area of interest and smoothly vanishes outside of a bigger set, we call a  bump function.

There are many examples, where bump functions are used for the study of local properties of dynamical systems in $\R^n$. For instance, see \cite{N} and \cite{St}. J. Palis in his work \cite{P} considers bump functions in Banach spaces. He proves the existence of Lipschitz-continuous extensions of local maps with the help of Lipschitz-continuous bump functions. However, Z. Nitecki (\cite{N}) points out that generally speaking, the smoothness of these extensions may not be higher than Lipschitz.

This is an obstacle in the local analysis of dynamical systems in infinite-dimensional spaces. The majority of infinite-dimensional Banach spaces do not have smooth bump functions. In the works \cite{B}, \cite{B-R}, \cite{R} we discuss the conditions when two $C^{\infty}$ diffeomorphisms on some Banach spaces are locally $C^{\infty}$-conjugate. To construct the conjugation, we use bounded smooth locally identical maps. We call them blid maps. Blid maps are the maps that substitute bump functions and allow localization of Banach spaces.

The main objectives of this paper are to present blid maps (Section~\ref{sec-blid maps}) and to introduce the questions of existence of smooth blid maps on various infinite dimensional Banach spaces and their subsets (Sections~\ref{sec-examples},~\ref{sec-questions}). One of the important questions that arises in this topic is the following: Which infinite dimensional spaces possess smooth blid maps? 

As an application example, for spaces possessing blid maps, we prove an infinite-dimensional version of the Borel lemma on a reconstruction of $C^\infty$ map from the derivatives at a point (Section~\ref{sec-Borel-lemma}). We also show how blid maps assist in the proof of decomposition lemmas, frequently used in local analysis (Section~\ref{sec-decomp-lemmas}). Finally, in Section~\ref{section-equation} we apply blid maps to the investigation of cohomological equations with a linear transformation of an argument, which frequently arise in the normal forms theory.

We discuss a possibility of extension of a map $f:U\to Y$, where $U$ is a neighborhood of a point $z$ in a space $X$. More precisely, does there exist a mapping defined on the entire $X$, which coincides with $f$ in some (smaller) neighborhood? Because we do not specify this neighborhood, there arises the following notion of a germ at a point. 

Let $X$ be a real Banach space, $Y$ be either a real or complex one, and $z\in X$ be a  point. Two maps $f_1$ and $f_2$ from neighborhoods $U_1$ and $U_2$ of the point $z$ into $Y$ are called {\it equivalent} if there is a neighborhood $V \subset U_1\cap U_2$   of $z$ such that both of the maps coincide on $V$. A {\it germ at $z$} is an equivalence class.
Therefore, every local map $f$ from a neighborhood of $z$ into $Y$ defines a germ at $z$. Sometimes in the literature it is denoted by $[f]$, although in general we will use the same notation, $f$, as for the map.
 
We consider Fr{\'e}chet $C^q$-maps with $q=0,1,2...,\infty$. All notions and notations of differential calculus in Banach spaces we borrow from \cite{C}.
 
 For a given $C^q$-germ $f$ at $z$, we pose the following questions.
 \begin{question}\label{quest-exists-global}
 Does its global $C^q$-representative (i.e., a $C^q$-map defined on the whole $X$) exist?
 \end{question}
\begin{question}\label{quest-exists-global-bounded}
Assume that $f$ has  local representatives with bounded derivatives. Does there exist a global one with the same property?
\end{question}
It was shown in the works \cite{DH} and \cite{HJ} that there exist separable Banach spaces that do not allow $C^2$-smooth extension of a local $C^2$-representative\footnote{We thank reviewers of this paper for the reference to this result.}. 
\par
Below, without loss of generality we assume that $z=0$.
\par
Usually for extension of local maps described in Question~\ref{quest-exists-global} and Question~\ref{quest-exists-global-bounded} bump functions are used. The classical definition of a bump function (\cite{S}) is a non-zero bounded $C^q$-function from $X$ to $\R$ having a bounded support. We use a similar modified definition which is more suitable for our aims. Namely, a bump function at $0$ is a $C^q$-map $\delta_U:X \to \R$  which is equal to $1$ in a neighborhood  of $0$ and vanishing outside of a lager neighborhood $U$. If $f$ is a local representative of a $C^q$-germ defined in a neighborhood $V$ ($\overline{U}\subset V$) then

\begin{equation}\label{eqn-F} 
F(x)=\left\{
\begin{array}{ll}
\delta_U(x)f(x),& x\in U\\
0,& x\notin U
\end{array}
\right.
\end{equation}

is a global $C^q$-representative of the germ $f$, and it solves at least the Question~\ref{quest-exists-global}. If, in addition, all derivatives of $\delta_U$ are bounded on the entire $X$, then \eqref{eqn-F} solves both of the Questions. If these functions do exist for any $U$, then every germ has a global representative.

A continuous bump function exists in any Banach space. It suffices to set 
$\delta(x)=\tau(||x||)$, where $\tau$ is a continuous bump function at zero on the real line.
    Let $p=2n$ be an even integer. Then
                       $$  \delta(x)=\tau(||x||^p)$$
is a $C^\infty$-bump function at zero on $l_p$. Here $\tau$ is a $C^\infty$-bump function on the real line.
 
However, if $p$ is not an even integer, then $l_p$ space does not have $C^q$-smooth ($q>p$) bump functions (see~\cite{M}). The Banach-Mazur theorem states that any real separable Banach space is isometrically isomorphic to a closed subspace of $C[0,1]$.  Consequently, the space of $C[0,1]$ does not have smooth bump functions at all 
(see~\cite{K} or \cite{M}). Following V.Z. Meshkov (\cite{M}), we will say that a space is $C^q$-smooth, if it possesses a $C^q$-bump function.\footnote{Usually, a space is called smooth if it satisfies a similar property related to the smoothness of a norm (see, for example, \cite{F-M}). However, we will adopt the definition of \cite{M}.}
 
\begin{example}\label{example-integral}
The real function
           $$
          f(x)=\int_0^1 \frac{dt}{1-x(t)},\ \    x\in C[0,1]
 $$
defines a $C^\infty$ (which is even analytic) germ at zero. In spite of the absence of smooth bump functions, the germ has a
global $C^\infty$ representative. To show this, let $h$ be  a real $C^\infty$-function on the real line such that
\begin{equation}\label{eqn-h}
          h(s)=\left\{
\begin{array}{ll}
s, &|s|<1/3\\
0, &|s|>1/2. 
\end{array}
\right.
\end{equation}

Then the $C^\infty$-function
 $$
                F(x)=\int_0^1 \frac{dt}{1-h(x(t))}
 $$
coincides with $f$ in the ball $||x||<1/3$ and is a global representative of the germ with bounded derivatives of all orders.
\end{example}

\section{Blid maps}\label{sec-blid maps}
\begin{definition}\label{def-blid map} A  $C^q$-{\it blid map} for a Banach space $X$ is a global {\bf B}ounded {\bf L}ocal {\bf Id}entity at zero $C^q$-map $H:X \to X$.
\end{definition}

     In other words, $H$ is a global representative of  the germ at zero of identity map such that
 $$
                               \sup_x || H(x)||<\infty.
 $$
The existance of blid maps allows locally defined mappings to be extended to the whole space.
 
\begin{theorem}\label{thm-extension} Let a space $X$ possesses a $C^q$-blid map $H$. Then for every Banach space $Y$ and any $C^q$-germ $f$ at zero from $X$ to $Y$ there exists a global $C^q$-representative. Moreover, if all derivatives of $H$ are bounded, and $f$ contains a local representative bounded together with all its derivatives, then it has a global one with the same property.
\end{theorem}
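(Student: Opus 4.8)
The plan is to mimic the classical bump-function construction in \eqref{eqn-F}, but with the blid map $H$ playing the role of the bump function. Let $f:V\to Y$ be a local $C^q$-representative of the germ at $0$, defined on a neighborhood $V$ of $0$. The key idea is that $H$ is identically the identity on some neighborhood of $0$, say $H(x)=x$ for $\|x\|<r$, and $H$ is globally bounded, so its image $H(X)$ lies in some ball $B_\rho=\{\,y:\|y\|\le\rho\,\}$. After shrinking $r$ if necessary we may assume $B_r\subset V$. The trouble is that $H$ takes values in all of $X$, and $f$ is only defined near $0$, so the naive composition $f\circ H$ need not make sense.

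To repair this I would first \emph{localize the target of $H$}. Since $f$ is defined on $V\supset B_r$, I compose $H$ with a rescaling that pulls its image back into the domain of $f$. Concretely, choose a scalar $\lambda>0$ small enough that $\lambda\rho<r$, and set $H_\lambda(x)=\lambda^{-1}H(\lambda x)$. Then $H_\lambda$ is again a $C^q$ blid map: it is the identity for $\|x\|<r/\lambda$ (where $\lambda x$ lands in the identity neighborhood of $H$), and its image satisfies $\|H_\lambda(x)\|=\lambda^{-1}\|H(\lambda x)\|\le\lambda^{-1}\rho$. This does not yet force the image into $B_r$, so instead I rescale in the opposite direction: define $\widetilde H(x)=\mu\,H(x)$ with $\mu=r/(2\rho)$, so that $\|\widetilde H(x)\|\le r/2<r$ for all $x$, while $\widetilde H(x)=\mu x$ near $0$. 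Composing $f$ with a linear correction $x\mapsto \mu^{-1}x$ beforehand gives a map whose values lie in $B_r\subset V$, hence $F(x):=f\bigl(\mu^{-1}\widetilde H(x)\bigr)=f\bigl(H(x)\bigr)$ is well defined on all of $X$. Since $H$ is the identity on $B_r$, we have $F(x)=f(x)$ there, so $F$ is a global representative of the germ.

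Next I would verify smoothness and boundedness. The map $F=f\circ H$ is a composition of $C^q$-maps on the region where it is defined; by the chain rule in Banach spaces (from \cite{C}) it is $C^q$ on $X$, because $H$ is globally $C^q$ and its image lies in the open set $V$ on which $f$ is $C^q$. This settles Question~\ref{quest-exists-global}. For the second assertion, suppose all derivatives of $H$ are bounded on $X$ and $f$ has a local representative bounded together with all its derivatives. The Fa\`a di Bruno / chain-rule formula expresses $D^kF(x)$ as a finite sum of terms, each a composition of a derivative $D^jf\bigl(H(x)\bigr)$ (evaluated in the compact, bounded image $H(X)\subset B_r$ where all $D^jf$ are bounded by hypothesis) contracted against products of derivatives $D^{i_1}H(x),\dots,D^{i_m}H(x)$ (each bounded by hypothesis). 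A product and finite sum of bounded quantities is bounded, so every $D^kF$ is bounded on $X$, giving the answer to Question~\ref{quest-exists-global-bounded}.

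The step I expect to be the main obstacle is the \emph{control of the image of $H$ inside the domain of $f$}. The definition only guarantees $H$ is bounded, not that its image lies in a prescribed neighborhood $V$ of $0$; reconciling ``$H$ is the identity near $0$'' with ``$H(X)$ must fit inside $V$'' requires the rescaling argument above, and one must check carefully that after rescaling the map is still simultaneously a blid map \emph{and} has image in $V$ — these two requirements pull the scaling parameter in opposite directions, so the argument must confirm they are compatible (they are, because the identity neighborhood and the target ball can be shrunk together). Once that is arranged, the smoothness via the chain rule and the boundedness via Fa\`a di Bruno are routine, the only subtlety being to note that the derivatives of $f$ need only be bounded on the bounded set $H(X)$, not on all of $V$.
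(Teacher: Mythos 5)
The core difficulty you correctly identify --- that $H(X)$ is only bounded, not necessarily contained in the domain $V$ of the local representative $f$ --- is exactly what the paper's proof must overcome, but your resolution of it does not work. Your final construction sets $\widetilde H(x)=\mu H(x)$ with $\mu=r/(2\rho)$ and then precomposes $f$ with the linear correction $x\mapsto\mu^{-1}x$, arriving at $F(x)=f\bigl(\mu^{-1}\widetilde H(x)\bigr)$. But $\mu^{-1}\widetilde H(x)=H(x)$: the correction exactly cancels the outer scaling, so the argument fed to $f$ is again $H(x)\in B_\rho$, which need not lie in $V$. You are back to the naive composition $f\circ H$ that you had already rejected as possibly undefined. (Scaling the output alone also destroys the blid property, since $\mu H(x)=\mu x\neq x$ near zero --- which is precisely what forced you into the cancelling correction.)

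The repair is the one your \emph{first} attempt almost reached: rescale the argument and the value simultaneously. With $\|H(x)\|<N$ globally, $H(x)=x$ for $\|x\|<n$, and $B_\epsilon\subset V$, set $H_1(x)=\frac{\epsilon}{N}H\bigl(\frac{N}{\epsilon}x\bigr)$. The outer factor $\epsilon/N$ compresses the image into $B_\epsilon\subset V$, while the inner factor $N/\epsilon$ guarantees that for $\|x\|<\epsilon n/N$ one has $H_1(x)=\frac{\epsilon}{N}\cdot\frac{N}{\epsilon}x=x$, so $H_1$ is still a blid map. Then $F=f\circ H_1$ is globally defined, agrees with $f$ on $\|x\|<\epsilon n/N$, and is $C^q$ by the chain rule; your Fa\`a di Bruno argument for boundedness of the derivatives then goes through with $H_1$ in place of $H$ (noting only that $H_1(X)$ is bounded, not necessarily compact, which suffices). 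Your candidate $H_\lambda(x)=\lambda^{-1}H(\lambda x)$ is of exactly the right form, but you chose $\lambda$ small, which enlarges the image; taking $\lambda=N/\epsilon$ large gives the paper's map.
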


\begin{proof} 
Let $||H(x)||<N$ for all $x\in X$, and $H(x)=x$ for $||x||<n$. Further, let $f$ be a representative of a germ defined on a neighborhood $U$, and let a closed ball $B_{\epsilon}=\{x: ||x|| \leq \epsilon\}\subset U$. The map
\begin{equation}
              H_1(x)=\frac{\epsilon}{N} H\left(\frac{N}{\epsilon} x\right)
\end{equation}
is a $C^q$-blid map also, and its image is contained in $U$. Therefore the map
 \begin{equation}\label{eqn-global-F}       
F(x)=f(H_1(x))
\end{equation}
is well-defined on the whole space $X$, and it coinsides with the map $f$ in the neighborhood  $||x||<\frac{\epsilon n}{N}$.
The map $F$ is a global $C^q$-representative of the germ $f$. If both of the maps $H$ and $f$ are bounded together with all of their derivatives, then $F$ possesses the same property. This completes the proof.
 \end{proof}
  
\section{Examples}\label{sec-examples}
Let us present spaces having blid maps. 
 
\begin{itemize} 
\item[1.]  Let $X$ be $C^q$-smooth, and let $\delta (x)$ be a $C^q$-bump function at zero. Then
 $$
                   H(x)=\delta(x) x
 $$
is a $C^q$-blid map. If the bump function is bounded together with all its derivatives, then $H$ has the same property.
\item[2.]  Let  $X=C(M)$ be the space (a Banach algebra) of all continuous functions on a compact Hausdorff space $M$ with
 $$
                     ||x||=\max_t|x(t)| ,\ \  t\in M,
 $$
and let h be a $C^\infty$-bump function on the real line. Then the map
 
 \begin{equation}\label{C[01]-blid map}
                                     H(x)(t)=h(x(t))x(t),\ \  x\in X
 \end{equation}
is $C^\infty$-blid map with bounded derivatives of all orders. 

Indeed, since $h$ is locally equal to 1, $H$ is a local identity. Let $\epsilon$ be a positive real number such that $h(\tau)\equiv 0$ for all $|\tau| >\epsilon$. We can always find such $\epsilon$, because bump functions have bounded support.\\ 
Then, $$||H(x)(t)|| = || h(x(t))x(t) ||\leq \epsilon.$$ Also, \\
$$
||H'(x)(t)||= ||h'(x(t)) x(t)+ h(x(t))|| \leq \epsilon \sup|h'| +1.
$$
\\
Similarly, one can show boundedness of all higher order derivatives.
\item[3.]  More generally, let $X\subset C(M)$ be a subspace such that
 
\begin{equation}\label{ideal-property}
x(t)\in X \implies h(x(t))x(t)\in X
 \end{equation}
for any $C^\infty$  bump function $h$ on the real line.

 Then~\eqref{C[01]-blid map} defines a $C^\infty$-blid map with bounded derivatives. For example, any ideal $X$ of the algebra satisfies \eqref{ideal-property}.
\item[4.] Let $X=C^n(M)$ be the space (which is also a Banach algebra)  of all $C^n$-functions on a smooth compact manifold $M$ with or without boundary, and
$$
         ||x||=\max_k\max_t ||x^{(k)}(t)||,\  k\leq n,\ t\in M.
 $$
Then \eqref{C[01]-blid map} gives a $C^\infty$-blid map with bounded derivatives. The same holds for any closed subspace $X\subset C^n(M)$ satisfying \eqref{ideal-property}. As above, $X$ may be an ideal of the algebra.
\item[5.]  Let $X$ possess a $C^q$-blid map $H$, and a subspace\footnote{By definition, a subspace of a Banach space is always closed.} $X_1$ of $X$ be $H$-invariant. Then the restriction $H_1=H|X_1$ is a $C^q$-blid map on $X_1$.
\item[6.]  Let $\pi:X \to X$ be a bounded projector and $X$ possess $C^q$-blid map $H$. Then the restriction $\pi(H)|Im(\pi)$ is a $C^q$- blid map on $Im(\pi)$, while the restriction $(H-\pi(H))|Ker(\pi)$ is a $C^q$-blid map on $Ker(\pi)$. Consequently, if $X_1\subset X$ is a subspace, such that there exists another subspace of $X$, so that these two form a complementary pair, then $X_1$ possesses a blid map.
 \end{itemize}

\begin{corollary}
Let  a space $X$ be as in items \textup{1-6.} Then for any Banach space $Y$ and any $C^q$-germ at zero there is a global $C^q$-representative. If the germ contains a local representative with bounded derivatives, then there is a global one with the same property.
\end{corollary}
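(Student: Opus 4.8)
The plan is to derive the statement as an immediate application of Theorem~\ref{thm-extension}, whose single hypothesis is the existence of a $C^q$-blid map on $X$. Each of the items 1--6 exhibits precisely such a map on the space under consideration, so the only task is to match those constructions against the two halves of the theorem and, for the refined assertion, to track boundedness of derivatives through each construction.

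First I would settle the existence of a global $C^q$-representative. For a space $X$ of any of the types 1--6, the cited item produces a $C^q$-blid map $H\colon X\to X$; for instance the rescaled pointwise map \eqref{C[01]-blid map} in items 2--4, the product $\delta(x)x$ in item 1, or the restriction/projection maps in items 5--6. Feeding any one of these into Theorem~\ref{thm-extension}, together with an arbitrary Banach space $Y$ and an arbitrary $C^q$-germ $f$ at zero, immediately returns the global $C^q$-representative $F(x)=f(H_1(x))$ of \eqref{eqn-global-F}, where $H_1$ is the rescaling of $H$ built in the proof of that theorem. No further argument is needed for the first conclusion.

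Next I would treat the bounded-derivatives refinement, which requires the blid map itself to have all its derivatives bounded so that the ``moreover'' clause of Theorem~\ref{thm-extension} applies. For items 2, 3, and 4 this is asserted explicitly in the construction, via the estimates $\|H(x)(t)\|\le\epsilon$, $\|H'(x)(t)\|\le\epsilon\sup|h'|+1$, and their higher-order analogues, so the refined conclusion follows verbatim. For item 1 the same holds under the stated hypothesis that the bump function $\delta$ is bounded together with all its derivatives. The only points deserving a separate check are items 5 and 6, where the blid map arises from an ambient one by restriction to a subspace or by composition with a bounded projector $\pi$. This is the mild obstacle, and I expect it to reduce at once to the chain rule for Fr\'echet derivatives together with the submultiplicativity of operator norms: restriction to a closed subspace does not enlarge the norm of any derivative, and pre- or post-composition with a bounded linear map multiplies the $k$-th derivative norm by at most a power of $\|\pi\|$. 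Hence $H|X_1$, $\pi(H)|Im(\pi)$, and $(H-\pi(H))|Ker(\pi)$ inherit bounded derivatives from the ambient blid map. With these verifications in place, Theorem~\ref{thm-extension} yields both conclusions for every space of the listed types.
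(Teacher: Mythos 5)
Your proposal is correct and matches the paper's (implicit) argument exactly: the corollary is stated without a written proof because it is the immediate combination of Theorem~\ref{thm-extension} with the blid maps constructed in items 1--6, which is precisely what you do. Your extra care in checking that the restricted/projected blid maps of items 5--6 inherit bounded derivatives, and your observation that item 1 yields the bounded-derivative refinement only under the stated extra hypothesis on the bump function, are both sound and slightly more explicit than the paper itself.
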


\section{Applications}
\subsection{The Borel lemma for Banach spaces}\label{sec-Borel-lemma}
Let $X$ be a linear space over a field $\lowK$ ($char \ \lowK =0$) and $Y$ be a linear space over a field $\K$ ($\lowK\subset\K$). A map $P_j:X \to Y$ is called a polynomial homogeneous map of degree $j$ if there is a $j$-linear map
$$
                   g: 
\underbrace{
X\times X\times ...\times X 
}_{j}
\to Y
$$
such that $P_j(x)=g(x,x,...,x)$. 
For the given $P_j(x)$, the map $g$ is not unique, but there is a unique symmetric one. We will assume that $g$ is symmetric.
Then, the first derivative of $P_j$ at a point $z\in X$, is a linear map $X \to Y$, and can be calculated by the formula
 $$
          P'_j(z)(x)=jg(\underbrace{z,...,z}_{j-1},x)
 $$
In general, the derivative of order $n\leq j$, is a homogeneous polynomial map of degree $n$ and equals
 $$
          P_j^{(n)}(z)(x)^n=j(j-1)...(j-n+1)g(\underbrace{z,...,z}_{j-n},\underbrace{x,...x}_{n})
 $$
 In particular,
 $$
          P_j^{(j)}(z)(x)^j=j!g(x,...,x)=j!P_j(x)
 $$
does not depend on $z$. And lastly,  for $n>j$, $P_j^{(n)}(z)=0$.
 
It follows that all derivatives of $P_j$ at zero are zero, except for the order $j$.
 
The latter equals  $P_j^{(j)}(0)(x)^j=j!P_j(x)$.

Now, let $X$ and $Y$ be Banach spaces. Recall that $X$ must be real, while $Y$ can be real or complex. Let $f: X\to Y$ be  a local $C^\infty$ map. Then
 \begin{equation}\label{P-j}
                        P_j(x)=f^{(j)}(0)(x)^j
 \end{equation}
for any $j=0,1,...$ is a polynomial map of degree $j$, and it is continuous and even $C^{\infty}$. Therefore, for some $c_j>0$ we have the estimate
\begin{equation}\label{estimate}
||P_j(x)||\leq c_j ||x||^j ,\   \ x\in X.
\end{equation}
 
\begin{question}\label{surj} Given a sequence $\{P_j\}_{j=0}^{\infty}$ of continuous polynomial maps from $X$ to $Y$ of degree $j$, does there exist a $C^\infty$-germ $f: X \to Y$, which satisfies \eqref{P-j} for all $j=0,1,...$? \end{question}

The classical Borel lemma states, that given a sequence of real numbers $\{a_n\}$, there is a $C^\infty$ function $f$ on the real
line such that  $f^{(n)}(0)=a_n$. This means that the answer to the Question~\ref{surj} is affirmative for $X=Y=\R$. The same is true for finite-dimensional $X$ and $Y$.

\begin{theorem}[The Borel lemma] Let a Banach space X possess a $C^\infty$-blid map with bounded derivatives of all orders.
Then for any Banach space Y and any sequence $\{P_j\}_{j=0}^{\infty}$ of continuous homogeneous polynomial maps from $X$ to $Y$ there is a
$C^\infty$-map $f: X\to Y$ with bounded derivatives of all orders such that \eqref{P-j} is satisfied for all $j=0,1,...$
 \end{theorem}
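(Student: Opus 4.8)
The plan is to imitate the classical one–dimensional Borel construction, replacing the bump–function cutoff by a change of variables through rescaled blid maps. Write $H$ for the given $C^\infty$-blid map, so that $||H(x)||<N$ for all $x$, $H(x)=x$ for $||x||<n$, and $\sup_x||H^{(m)}(x)||=M_m<\infty$ for every $m$. For a scale $\lambda>0$ put $H_\lambda(x)=\lambda H(x/\lambda)$; as in the proof of Theorem~\ref{thm-extension} this is again a blid map, with $||H_\lambda(x)||<\lambda N$ and $H_\lambda(x)=x$ for $||x||<n\lambda$. The building blocks are
$$
g_j(x)=\tfrac{1}{j!}\,P_j\bigl(H_{\lambda_j}(x)\bigr),\qquad j=0,1,2,\dots,
$$
with scales $\lambda_j\in(0,1]$ to be fixed later. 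Since $H_{\lambda_j}$ is the identity on $||x||<n\lambda_j$, each $g_j$ coincides there with the polynomial $\tfrac{1}{j!}P_j$; hence $g_j^{(m)}(0)=0$ for $m\neq j$, while $g_j^{(j)}(0)(x)^j=\tfrac{1}{j!}P_j^{(j)}(0)(x)^j=P_j(x)$ by the derivative formulas for homogeneous polynomials given above.

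The candidate map is $f=\sum_{j=0}^\infty g_j$, and the whole issue is to choose $\lambda_j\to0$ fast enough that this series and all its formally differentiated series converge uniformly on $X$ with bounded sums. Here I would exploit the homogeneity of $P_j$ to rescale cleanly: with the fixed, globally defined map $\Phi_j:=P_j\circ H$, homogeneity gives
$$
g_j(x)=\frac{\lambda_j^{\,j}}{j!}\,\Phi_j\!\left(\frac{x}{\lambda_j}\right),\qquad\text{so}\qquad g_j^{(k)}(x)=\frac{\lambda_j^{\,j-k}}{j!}\,\Phi_j^{(k)}\!\left(\frac{x}{\lambda_j}\right).
$$

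Each $\Phi_j$ is $C^\infty$ with all derivatives bounded on $X$: by the chain rule its $k$-th derivative is a finite sum of terms $P_j^{(m)}(H(x))\bigl[H^{(i_1)}(x),\dots,H^{(i_m)}(x)\bigr]$ with $m\le\min(k,j)$, and each such term is bounded because $||H(x)||<N$ yields $||P_j^{(m)}(H(x))||\le \tfrac{j!}{(j-m)!}\beta_j N^{\,j-m}$ (where $\beta_j$ is the norm of the symmetric $j$-linear form of $P_j$, finite by \eqref{estimate} and polarization), while $||H^{(i_l)}(x)||\le M_{i_l}$. Writing $D_{j,k}:=\sup_x||\Phi_j^{(k)}(x)||<\infty$, we obtain the uniform estimate $\sup_x||g_j^{(k)}(x)||\le \tfrac{1}{j!}\lambda_j^{\,j-k}D_{j,k}$. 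The decisive feature is the gain $\lambda_j^{\,j-k}$: whenever $k<j$ the exponent is positive, so this bound tends to $0$ as $\lambda_j\to0$.

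This dichotomy makes the series summable order by order. Fix a differentiation order $k$. The terms with $j\le k$ form a finite sum of bounded maps, while for $j>k$ one has $j-k\ge1$, and we may choose each $\lambda_j$ small enough that $\tfrac{1}{j!}\lambda_j^{\,j-k}D_{j,k}\le 2^{-j}$ for all $k$ with $0\le k\le j-1$ at once — finitely many constraints per $j$, each merely forcing $\lambda_j$ small. With this choice $\sum_j g_j^{(k)}$ converges uniformly for every $k$, so by the standard term-by-term differentiation theorem $f=\sum_j g_j$ is $C^\infty$ with $f^{(k)}=\sum_j g_j^{(k)}$ bounded on $X$. Evaluating at $0$, the vanishing $g_m^{(j)}(0)=0$ for $m\neq j$ leaves only the summand $m=j$, giving $f^{(j)}(0)(x)^j=P_j(x)$ for every $j$. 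The main obstacle — and the only place the blid hypothesis is genuinely used — is the uniform-in-$x$ estimate on $g_j^{(k)}$ carrying the favorable power of $\lambda_j$; the rest is bookkeeping of the scales.
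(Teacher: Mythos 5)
Your construction is exactly the paper's: the same series $f=\sum_j \frac{1}{j!}P_j(\epsilon_j H(x/\epsilon_j))$, the same estimate $\|g_j^{(k)}\|\le \frac{1}{j!}\epsilon_j^{\,j-k}c_{j,k}$ obtained from boundedness of $H$ and its derivatives, and the same choice of scales to force $C^\infty$-convergence. Your write-up just supplies the details (homogeneity rescaling, the chain-rule bound on $\Phi_j^{(k)}$, the explicit $2^{-j}$ summability condition) that the paper leaves implicit, and it is correct.
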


\begin{proof} Let $H$ be a $C^\infty$-blid map at zero with bounded derivatives on $X$. Set $H_j(x)=\epsilon_j H(x/\epsilon_j)$. For a given $\epsilon_j$ the map $H_j(x)$ is a $C^{\infty}$-blid map also. 
 
Then the map $P_j(H_j(x))$ belongs to $C^\infty(X,Y)$, and all its derivatives at $0$ are zero, except for  the order $j$. The latter equals to $P_j^{(j)}(0)(x)^j=j!P_j(x)$. In addition, all derivatives of the map are bounded, and the derivative of order $n$ allows the following estimate
 $$
                        ||(P_j(H_j(x))^{(n)}||\leq \epsilon_j^{j-n}c_{j,n}
$$
with constants $c_{j,n}$ depending only on the maps $P_j$, $H$, and not depending on a choice of $\epsilon_j$. 
Therefore, under an appropriate choice of $\epsilon_j$ the series
 $$
     f(x)=\sum_0^\infty \frac{1}{j!} P_j\left(H_j(x)\right)
$$
converges in $C^\infty$ topology to a map from $X$ to $Y$. It is clear that
                      $$f^{(n)}(0)(x)^n=P_n(x).$$
This equality proves the statement.
\end{proof}
 
\begin{corollary} Let a space X  be as in items \textup{1-6} of Section~\textup{\ref{sec-examples}}. 
Then for any Banach space Y and any sequence $\{P_j\}_{j=0}^{\infty}$ of continuous homogeneous polynomial maps from $X$ to $Y$ there is a
$C^\infty$-map $f:X\to Y$ with bounded derivatives of all orders such that \eqref{P-j} is satisfied for all $j=0,1,...$
\end{corollary}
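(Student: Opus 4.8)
The plan is to deduce the statement directly from the preceding Borel lemma, whose sole hypothesis on $X$ is that it carry a $C^\infty$-blid map with bounded derivatives of all orders. Thus the entire task reduces to checking, case by case, that each of the spaces described in items 1--6 of Section~\ref{sec-examples} admits such a blid map; once this is verified, the conclusion follows word for word by applying the theorem. In other words, this corollary is to the Borel lemma exactly what the earlier extension corollary is to Theorem~\ref{thm-extension}, and I would structure it the same way.

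First I would dispose of items 2, 3, and 4, i.e.\ $C(M)$, its closed subspaces satisfying the ideal condition \eqref{ideal-property}, and $C^n(M)$. For these the explicit construction \eqref{C[01]-blid map}, $H(x)(t)=h(x(t))x(t)$, was already shown in Section~\ref{sec-examples} to define a $C^\infty$-blid map whose derivatives of all orders are bounded. Hence the hypothesis of the Borel lemma holds verbatim, and nothing further is needed. For item 1, where $X$ is assumed $C^q$-smooth with bump function $\delta$ and $H(x)=\delta(x)x$, I would specialize to $q=\infty$ and to the case in which $\delta$ is bounded together with all its derivatives; by the Leibniz rule $H$ is then a $C^\infty$-blid map with all derivatives bounded, precisely as noted in item 1, so again the lemma applies.

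For items 5 and 6 the relevant blid maps are produced from an already available $C^\infty$-blid map $H$ with bounded derivatives: in item 5 by restriction to an $H$-invariant closed subspace $X_1$, and in item 6 by pre- and post-composition with a bounded projector $\pi$ on the summands $\mathrm{Im}(\pi)$ and $\mathrm{Ker}(\pi)$. Since restriction to a closed subspace and composition with a bounded linear operator both preserve $C^\infty$-smoothness as well as the boundedness of every derivative, the resulting maps inherit the required properties, and the Borel lemma again applies. Having verified the hypothesis in all six cases, I would conclude by invoking the Borel lemma to obtain the desired $f\in C^\infty(X,Y)$ with bounded derivatives satisfying \eqref{P-j}.

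I do not expect a genuine obstacle here; the statement is a routine specialization of the theorem. The only point demanding care is to confirm that each construction preserves boundedness of \emph{all} derivatives, not merely of the map itself, and that for item 1 one must restrict to $q=\infty$ with a bump function whose every derivative is bounded—a hypothesis not automatic from mere $C^\infty$-smoothness of the space. Making these preservation properties explicit (via the Leibniz rule for item 1 and linearity of $\pi$ for item 6) is the whole content of the argument.
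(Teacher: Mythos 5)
Your proposal is correct and matches the paper's (implicit) argument: the corollary is stated without proof precisely because it is the Borel lemma applied to the blid maps constructed in items 1--6 of Section~\ref{sec-examples}. Your added caveat about item 1 (needing $q=\infty$ and a bump function with all derivatives bounded) is a sensible reading of the hypothesis that the paper leaves tacit.
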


\subsection{Decomposition lemmas}\label{sec-decomp-lemmas}
In this section we will state lemmas that are useful in local and global analysis. We will use them in Section~\ref{section-equation}, where we present solution of the cohomological equation.

Let $X$ possesses a $C^\infty$-blid map $H$, and can be decomposed into a sum of two subspaces,
$$
          X=X_++X_-.
$$
Then there is a bounded projector $\pi_+:X \to X$ with $Im(\pi_+)=X_+$,
and the bounded projector $\pi_-=id-\pi_+$ onto $Ker(\pi_+)=X_-$. One can
write
$$ 
         x=(x_+, x_-),\  x\in X, \ x_{\pm}\in X_{\pm}.
$$
Denote by $H_+(x_+)$ and $H_-(x_-)$ the corresponding $C^\infty$-blid maps on $X_+$ and $X_-$.

      Let now $Y$ be a Banach space. Recall that a $C^\infty$-map $f:X \to Y$ is called flat on a subset $S$ if it vanishes on $S$ together with all its derivatives. 

\begin{lemma}\label{f-plus-minus} Let all derivatives of $H$ be bounded on $X$. Let a map $f_0:X \to Y$ be bounded with all derivatives on every bounded subset, and be flat at zero. Then there is a decomposition $f_0=f_++f_-$ and a neighborhood $U$ of zero such that
$f_{+}$ ($f_{-}$) has the same boundedness property and is flat on the intersection $X_{+}\cap U$ ($X_{-}\cap U$). 
\end{lemma}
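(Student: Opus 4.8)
The plan is to reduce the statement to a parametrized version of the Borel lemma proved above. Writing $x=(x_+,x_-)$ and denoting by $D_-$ the Fr\'echet derivative in the $X_-$-directions, I first record the transverse jet of $f_0$ along $X_+$, that is the sequence
\[
c_k(x_+)=D_-^k f_0(x_+,0),\qquad k=0,1,2,\dots,
\]
where each $c_k(x_+)$ is a symmetric $k$-linear map $X_-\to Y$ depending on $x_+$. Two properties drive everything. Since $f_0$ is bounded together with all its derivatives on every bounded set, so is each $c_k$ (as a map of $x_+$). And since $f_0$ is flat at zero, every mixed derivative $D_+^j c_k(0)=D_+^jD_-^k f_0(0,0)$ vanishes, so each $c_k$ is itself flat at the origin of $X_+$.

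Next I would manufacture the summand $f_-$ so that it carries exactly this transverse jet while remaining flat on $X_-$. Mimicking the Borel lemma, set $\widetilde H_k(x_-)=\epsilon_k H_-(x_-/\epsilon_k)$ (rescaled $C^\infty$-blid maps on $X_-$) and define
\[
f_-(x)=\sum_{k=0}^{\infty}\frac{1}{k!}\,c_k(x_+)\big(\widetilde H_k(x_-)\big)^k,\qquad f_+=f_0-f_-.
\]
Each $k$-th term is the composition of the homogeneous degree-$k$ polynomial $y\mapsto\tfrac1{k!}c_k(x_+)(y)^k$ with the blid map $\widetilde H_k$, exactly as $P_j(H_j(x))$ in Section~\ref{sec-Borel-lemma}. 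Because $\widetilde H_k$ is the identity near $x_-=0$, the polynomial-derivative identities of that section give $D_-^m f_-(x_+,0)=c_m(x_+)$ for every $m$, whence $D_-^m f_+(x_+,0)=0$ for all $m$ and all $x_+$ in a neighbourhood $U$ of zero. Differentiating this identity further in the $X_+$-directions and invoking the symmetry of higher Fr\'echet derivatives shows that every derivative of $f_+$ vanishes on $X_+\cap U$, i.e. $f_+$ is flat there. Flatness of $f_-$ on $X_-\cap U$ is immediate from the second property of the $c_k$: at a point $(0,x_-)$ every derivative of $f_-$ is a sum of terms each carrying a factor $D_+^j c_k(0)$, all of which vanish.

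The main obstacle, exactly as in the Borel lemma, is to make the series converge in the $C^\infty$ topology and to control all its derivatives on bounded sets. Here I would reuse the estimate
\[
\big\|D^n\big[(\widetilde H_k(x_-))^k\big]\big\|\le \epsilon_k^{\,k-n}\,C_{k,n},
\]
with $C_{k,n}$ depending only on $H_-$ and its (bounded) derivatives and not on $\epsilon_k$, and combine it with the bounds for $c_k$ and its $x_+$-derivatives on a fixed ball $\|x\|\le R$. Distributing the $n$ differentiations between the factors $c_k(x_+)$ and $(\widetilde H_k(x_-))^k$ by the Leibniz rule, the $k$-th term is bounded on $\|x\|\le R$ by $\epsilon_k^{\,k-n}$ times a constant independent of $\epsilon_k$; a diagonal choice $\epsilon_k\downarrow 0$ then makes $\sum_k\sup_{\|x\|\le R}\|D^n f_-\|$ finite for every $n$ and every $R$. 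Because the coefficients $c_k(x_+)$ are only bounded on bounded sets rather than globally, this produces boundedness of $f_+$ and $f_-$ together with all derivatives on every bounded set — precisely the required property — while $f_0=f_++f_-$ holds by construction.
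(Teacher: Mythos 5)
Your proposal is correct and follows essentially the same route as the paper's proof: a Borel-type series $\sum_k \frac{1}{k!}\,c_k(x_+)\bigl(\epsilon_k H_-(x_-/\epsilon_k)\bigr)^k$ realizing the transverse jet of $f_0$ along $X_+$, with the same derivative estimates $\epsilon_k^{\,k-n}c_{k,n}$ and a diagonal choice of $\epsilon_k$ to force $C^\infty$-convergence, followed by subtraction. The only deviation is that the paper first replaces $f_0$ by $f_0\circ H$ so that the jet coefficients are globally bounded (this is where its neighborhood $U=\{x: H(x)=x\}$ comes from), whereas you work with $f_0$ directly and settle for estimates on bounded sets --- which suffices for the boundedness property actually claimed in the lemma.
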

\begin{proof}Let $\epsilon_j<1$ and
$$
 P_j(x)=\left(\frac{\partial^j}{\partial x_-^j} f_0\left(H(x\right))_{|x_-=0}\right)\left(\epsilon_jH_-(x_-/\epsilon_j)\right)^j.
$$
Then $P_j$ are flat on $X_+$ in a neighborhood $$U=\{x: H(x)=x\},$$ and
\begin{equation}\label{partial-der1}
\left(\frac{\partial^{p+q}}{\partial x^p_+ \partial x^q_-} \left(P_j(x)\right)\right)|_{x_-=0}=0,\ \  q\neq j,\end{equation}
while 
\begin{equation}\label{partial-der2}
 \left(   \frac{\partial^{p+q}}{\partial x^p_+ \partial x^q_-} \left(P_j(x)\right)\right)|_{x_-=0}=j!\left(\frac{\partial^{p+q}}{\partial x^p_+ \partial x^q_-} f_0(H(x))\right)|_{x_-=0},\ \  q= j.
\end{equation}
Additionally, the maps satisfy an estimate
$$
  ||P_j^{(n)}(x)||\leq c_{n,j}\epsilon_j^{j-n}.
$$
Therefore, the series
$$
    f_+(x)=\sum_0^\infty \frac{1}{j!}P_j(x)
$$
converges in the $C^\infty$-topology under an appropriate choice of $\epsilon_j$. Its sum $f_+$ is a $C^\infty$-map from $X \to Y$ flat on $X_+$. Equations \eqref{partial-der1} and \eqref{partial-der2} imply
$$
         f_+^{(n)}(x)|_{x_-=0}=\left(\left(f_0(H(x))\right)^{(n)}\right)|_{x_-=0}
$$
As a result, the map
$$
        f_-(x)=f_0(x)-f_+(x) 
$$
 is flat on $X_-$ when $H(x)=x$.
\end{proof}

\begin{lemma}\label{v-plus-minus} Let a $C^\infty$-map $v:X \to Y$ vanish in a neighborhood of zero. Then there is a decomposition
$$
             v=v_++v_-
$$
into a sum of maps vanishing on strips $S_+(\epsilon)=\{x: ||x_+||<\epsilon\}$ and $S_-(\epsilon)=\{x: ||x_-||<\epsilon\}$ respectively.
\end{lemma}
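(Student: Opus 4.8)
The plan is to obtain the decomposition from a single explicit formula, using the \emph{boundedness} of a blid map on $X_+$ exactly where a classical argument would invoke the compact support of a bump function. Since $v$ vanishes in a neighborhood of zero, I would first fix $\rho>0$ with $v(x)=0$ whenever $\|x\|<\rho$. Then, following the rescaling $H_1(x)=\frac{\epsilon}{N}H(\frac{N}{\epsilon}x)$ from the proof of Theorem~\ref{thm-extension} (applied inside $X_+$), I would normalize the blid map $H_+$ so that $\|H_+(x_+)\|<\rho/2$ for \emph{every} $x_+\in X_+$, while $H_+(x_+)=x_+$ on some ball $\|x_+\|<\epsilon$. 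The point I would lean on is that $H_+$ is globally bounded, so the $X_+$-component of the point $H_+(x_+)+x_-$ stays trapped in the ball of radius $\rho/2$ however large $x_+$ may be.

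With this normalization I would simply set
$$
v_-(x)=v\bigl(H_+(x_+),x_-\bigr),\qquad v_+(x)=v(x)-v\bigl(H_+(x_+),x_-\bigr),
$$
so that $v_++v_-=v$ tautologically, and both summands are $C^\infty$ as compositions of $v$ with the bounded projectors $\pi_\pm$ and the smooth map $H_+$. Verifying the vanishing on the strips is then immediate. On $S_+(\epsilon)$ one has $H_+(x_+)=x_+$, so the two terms defining $v_+$ coincide and $v_+$ vanishes. On $S_-(\rho/2)$ one has $\|H_+(x_+)+x_-\|\le\|H_+(x_+)\|+\|x_-\|<\rho/2+\rho/2=\rho$, so the argument of $v$ lies in the vanishing neighborhood and $v_-$ vanishes. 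Taking the common width $\epsilon'=\min(\epsilon,\rho/2)$ yields the stated strips $S_\pm(\epsilon')$. (The identity is really the two surviving terms of the telescope $v=[v-v(H_+x_+,x_-)]+[v(H_+x_+,x_-)-v(H_+x_+,H_-x_-)]$, whose last term is identically zero; this also makes clear that $H_-$ could play the symmetric role, though a single blid map $H_+$ suffices.)

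I do not expect a genuine obstacle here, and the argument is markedly lighter than Lemma~\ref{f-plus-minus}: because $v$ is already zero near the origin rather than merely flat at a point, no Borel-type series and no convergence in $C^\infty$-topology are needed. The only delicate bookkeeping is the scaling of $H_+$: its image bound and half the strip width must jointly fit inside the radius $\rho$ of the vanishing ball, which is exactly what the Theorem~\ref{thm-extension} rescaling guarantees. This is also the conceptual crux of the lemma, since it is precisely the uniform image bound of the blid map — unavailable from a norm-based construction in spaces without smooth bump functions — that forces $v(H_+(x_+),x_-)$ into the region where $v$ already vanishes.
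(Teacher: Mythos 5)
Your proof is correct and is essentially the paper's own argument: the paper likewise telescopes $v$ against a rescaled blid map applied to one component (it uses $v_+(x)=v(x_+,H_-(x_-))$ with $H_-$ on $X_-$, i.e.\ the mirror image of your $v_-(x)=v(H_+(x_+),x_-)$), with the same two observations that the blid map's uniform image bound pushes the argument into the ball where $v$ vanishes and that the local identity property makes the difference term vanish on the other strip. Your explicit $\rho/2+\rho/2<\rho$ bookkeeping via the triangle inequality is, if anything, slightly more careful than the paper's.
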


\begin{proof} Let $v(x)=0$ as $||x||<\delta$. One can choose $\epsilon<\delta$ and a $C^\infty$-blid map $H$ to be such that
$$  ||H(x)||<\delta,\ \  x\in X,\ \  H(x)=x \mbox{  as  } ||x||<\epsilon.$$

Then the $C^\infty$-map
$$
          v_+(x)=v(x_+,H_-(x_-))
$$
vanishes as $||x_+||<\epsilon$, while the map
$$
       v_-(x)=v(x)-v_+(x)
$$
vanishes as $||x_-||<\epsilon.$
\end{proof}

Note that in the Lemma~\ref{v-plus-minus} we do not assume boundedness of derivatives of map $H$. 

\subsection{Cohomological equations in Banach spaces}\label{section-equation}

Given a map $ F:X \to X$, the equation 
 \begin{equation}\label{cohomol-eqn}
g(Fx)- g(x)=f(x)
 \end{equation}
will be called cohomological equation with respect to the unknown $C^\infty$ function $g: X \to \C$. 

Various versions of this equation are well-known and have been studied in multiple articles. Yu.I. Lyubich in the article~\cite{L} presents a very broad overview of this area and considers \eqref{cohomol-eqn} in a non-smooth category. For a discussion of smooth cohomological equations we recommend the book \cite{B-T}. 

In the present work, we consider a linear space $X$ over a field $K$, and a linear map $F=A: X \to X$. These cohomological equations are often studied in the theory of normal forms.

Consider a homogeneous polynomial map $f(x)=P_n(x)$ of degree $n>0$ and linear map $F=A$. Let us also look for a solution in a polynomial form, $g(x)=Q_n(x)$.
Then we arrive at the equality
 \begin{equation}\label{equality}
    (L_n-id)Q_n(x)=P_n(x),
 \end{equation}
where $L_nQ_n(x)=Q_n(Ax)$. If the operator $L_n-id$ is
invertible, then \eqref{equality} has a solution $Q_n(x)$ for every
$P_n(x)$. Otherwise, additional restrictions on $P_n(x)$ arise. If $X=K^m$ is finite-dimensional, then the invertability of $L_n-id$ is provided by the absence of  the resonance relations 
 
 \begin{equation}\label{res}
 1=a_1^{p_1}a_2^{p_2}....a_m^{p_m},
 \end{equation}
$a_k\in spec A,$ $\sum_1^m p_i =n.$
 
Therefore, the absence of resonance relations for all $n>0$
ensures the solvability of \eqref{equality} for any $n>0$ and any $P_n.$ If $K=R$, then the mentioned condition implies also that $A$ is hyperbolic, i.e., its spectrum does not intersect the unit circle in $\C$.
  Consider now \eqref{cohomol-eqn} with real finite-dimensional space 
$X$, and $f\in C^\infty$. If all equations \eqref{equality} are solvable for $P_n(x)=f^{(n)}(0)(x)^n$, then \eqref{cohomol-eqn} is called formally
solvable at zero. It is known that if $A$ is hyperbolic, invertible authomorphism, then formal solvability implies $C^\infty$ solvability. Our aim is to prove a similar assertion in the infinite-dimensional case.

     So, let $A:X \to X$ be a continuous invertible hyperbolic linear operator.
 Then, there is a direct decomposition $X=X_+ + X_-$ in a sum of $A$-invariant
subspaces such that spec $A_+=specA|X_+$
lies inside of the circle, while spec $A_-=spec A|X_-$ lies outside. Moreover, there is an equivalent norm in $X$ such that
 
 \begin{equation}\label{contraction}    
\begin{array}{l}
||A_+x||<q||x_+||,
\\
||A_-^{-1}x||<q||x_-||
\end{array}
 \end{equation}
with some $q<1.$
  If  \eqref{cohomol-eqn} has a solution, then $f(0)=0$, and we assume this condition to be fulfilled.
 
\begin{proposition} Let one of the subspaces $X_+$, $X_-$ be trivial, then \eqref{cohomol-eqn} has a global $C^\infty$-solution for any $C^\infty$-function $f$,
$f(0)=0$ with bounded derivatives on every bounded subset $S$.
 \end{proposition}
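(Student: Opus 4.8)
The plan is to treat the case $X_- = \{0\}$; the case $X_+ = \{0\}$ is completely symmetric, replacing $A$ by $A^{-1}$ throughout and using the ansatz $g(x) = \sum_{k=1}^\infty f(A^{-k}x)$. So assume $X = X_+$. Then the estimate \eqref{contraction} reduces to $\|Ax\| < q\|x\|$ with $q < 1$, hence $A$ is a contraction, $\|A^k\| \le q^k$, and $A^k x \to 0$ for every $x$. Iterating \eqref{cohomol-eqn} gives $g(A^n x) = g(x) + \sum_{k=0}^{n-1} f(A^k x)$; letting $n \to \infty$ and using $A^n x \to 0$ together with $f(0) = 0$ motivates the explicit candidate
$$ g(x) = -\sum_{k=0}^{\infty} f(A^k x). $$

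First I would check formally that this $g$ solves the equation. A telescoping computation gives $g(Ax) - g(x) = -\sum_{k=1}^\infty f(A^k x) + \sum_{k=0}^\infty f(A^k x) = f(x)$, and $g(0) = 0$ holds automatically since $f(0) = 0$. Thus, once convergence is established in a strong enough sense, $g$ is a genuine global $C^\infty$-solution.

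The substance of the proof is showing that the series converges in the $C^\infty$ topology, i.e.\ that it and all its formal derivatives converge uniformly on every bounded set. Fix a ball $\bar{B}_R = \{\|x\| \le R\}$. Since $\|A^k x\| \le q^k\|x\| \le R$, all iterates stay inside $\bar{B}_R$, where by hypothesis every $f^{(n)}$ is bounded, say $\|f^{(n)}(y)\| \le M_n$ for $y \in \bar{B}_R$. Writing $\phi_k(x) = f(A^k x)$ and using the higher-order chain rule with the linearity of $A^k$,
$$ \phi_k^{(n)}(x)(h_1, \dots, h_n) = f^{(n)}(A^k x)(A^k h_1, \dots, A^k h_n), $$
we obtain $\|\phi_k^{(n)}(x)\| \le M_n\|A^k\|^n \le M_n q^{kn}$ for $n \ge 1$, while for $n = 0$ the vanishing $f(0) = 0$ and the bound on $f'$ give $\|\phi_k(x)\| \le M_1\|A^k x\| \le M_1 R\, q^k$. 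In each case the $k$-th term is dominated by a convergent geometric series, so $\sum_k \phi_k$ converges uniformly on $\bar{B}_R$ together with all its derivatives. Hence $g \in C^\infty(X,\C)$, termwise differentiation is justified, and $g$ has bounded derivatives on every bounded subset.

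I expect the one genuinely delicate point to be precisely this $C^\infty$-convergence: it rests on the happy interaction between the spectral contraction $\|A^k\| \le q^k$ — which is exactly what hyperbolicity plus the equivalent norm \eqref{contraction} provides — and the growth control on $f$ (bounded derivatives on bounded sets, with $f(0) = 0$ supplying the extra factor $\|A^k x\|$ needed for the $n = 0$ term). It is worth noting that in this situation, where one subspace is trivial, no blid map or decomposition is actually required: the explicit geometric series does everything. The blid maps and the decomposition Lemmas~\ref{f-plus-minus} and~\ref{v-plus-minus} will be needed only to handle the full hyperbolic case, in which both $X_+$ and $X_-$ are nontrivial and the two one-sided series must be glued together.
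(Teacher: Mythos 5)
Your proof follows the paper's argument exactly: the same geometric series $g(x)=-\sum_{k\ge 0} f(A^kx)$, telescoping verification, and $C^\infty$-convergence via the estimate $\|(f(A^kx))^{(n)}\|\le q^{kn}c_n$ on bounded sets. Your treatment is in fact slightly more careful than the paper's, since you notice that the $n=0$ term needs the hypothesis $f(0)=0$ (the bare estimate $q^{k\cdot 0}c_0$ would not decay), which the paper glosses over.
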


\begin{proof} Assume $X_-=0$. Then the series
 $$
          g(x)=-\sum_0^\infty f(A^k x)
 $$
converges in the space of $C^\infty$-functions, since
 $$
                  ||(f(A^kx))^{(n)}||\leq q^{kn} c_n(S)
 $$
for every bounded subset $S\subset X=X_+$.
 
Similarly, if $X_+=0$, then the series
 $$
            g(x)=\sum_1^\infty f(A^{-k}x)
 $$
leads to a solution we need.
 \end{proof}
   If both of the spaces $X_+,X_-$ are non-trivial, then the existence of solutions requires
additional assumptions and constructions.
As above, $P_n(x)=f^{(n)}(0)(x)^n$ is a continuous homogeneous polynomial of degree $n$.
Let $g$ be a solution of \eqref{cohomol-eqn}. Differentiating both of its sides, we arrive at equation
\begin{subequations}
\label{eqns}
\begin{equation}
A_n Q_n(x)-Q_n(x)=P_n(x),\  \  n=1,2,3....,
\tag{\ref*{eqns}n}  \label{n-eqns}
\end{equation}
\end{subequations}
where $Q_n(x)=g^{(n)}(0)(x)^n$, and $A_n$ is a linear map in the Banach space of continuous homogeneous polynomials. It arises after $n$-multiple differentiation of the function $g(Ax)$. 
If all equations \eqref{n-eqns} have continuous solutions, then we say that \eqref{cohomol-eqn} is formally solvable at zero. The latter a-priory takes place if identity does not belong to $spec A_n$. The opposite condition  can be considered as an  infinite-dimensional version of resonance relations.
 
\begin{theorem} Let $A$  be a hyperbolic linear automorphism, and $X$ possesses a $C^\infty$-blid map with bounded derivatives on $X$. If all derivatives of f are bounded on every bounded subset, and \eqref{cohomol-eqn} is formally
solvable at zero (i.e. all equations \eqref{n-eqns} have continuous solutions), then there exists a global $C^\infty$-solution $g(x)$.
 \end{theorem}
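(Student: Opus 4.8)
The plan is to solve \eqref{cohomol-eqn} in two stages: first remove the formal (Taylor) part of $f$ with the Borel lemma, and then solve the resulting flat equation by splitting it along the hyperbolic decomposition $X=X_++X_-$ and summing one geometric series on each piece, exactly as the Proposition does in the one-sided cases $X_-=0$ and $X_+=0$.

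For the formal reduction, formal solvability gives continuous homogeneous polynomials $Q_n$ with $A_nQ_n-Q_n=P_n$, where $P_n(x)=f^{(n)}(0)(x)^n$. Since $X$ carries a $C^\infty$-blid map with bounded derivatives, the Borel lemma produces a $C^\infty$-map $g_0\colon X\to\C$ with bounded derivatives of all orders whose derivatives at zero are the $Q_n$. Putting $f_0:=f-\bigl(g_0\circ A-g_0\bigr)$, the equations \eqref{n-eqns} force every derivative of $f_0$ at zero to vanish, so $f_0$ is flat at zero, and $f_0$ still has derivatives bounded on every bounded set. It then suffices to find a $C^\infty$-solution $g_1$ of $g_1(Ax)-g_1(x)=f_0(x)$ and set $g=g_0+g_1$.

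For this flat equation I would apply Lemma~\ref{f-plus-minus} to write $f_0=f_++f_-$ with $f_+$ flat on $X_+$ and $f_-$ flat on $X_-$ (to infinite order), each bounded with all derivatives on bounded sets. Routing the construction through the bounded blid map $H$ is essential here: because $f_0\circ H$ has globally bounded derivatives, the flatness of $f_\pm$ on $X_\pm$ is uniform along the subspace directions, not merely on a bounded neighborhood. I would then solve each piece by iterating toward the subspace on which it is flat,
\[
g_+(x)=\sum_{k=1}^{\infty}f_+\!\left(A^{-k}x\right),\qquad g_-(x)=-\sum_{k=0}^{\infty}f_-\!\left(A^{k}x\right),
\]
and a telescoping identical to the one in the Proposition gives $g_\pm(Ax)-g_\pm(x)=f_\pm(x)$, whence $g_1=g_++g_-$. (The softer Lemma~\ref{v-plus-minus} would make these sums locally finite, but only after first arranging $f_0$ to vanish near zero, a localization I prefer to avoid.)

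I expect the main obstacle to be the $C^\infty$-convergence of the two series, where the hyperbolic estimates \eqref{contraction} and the uniform flatness must be balanced. Differentiating $f_+\circ A^{-k}$ by the chain rule produces factors of $A_+^{-k}$, whose norms grow at most geometrically in $k$; meanwhile the orbit $A^{-k}x$ approaches $X_+$ because its $X_-$-component contracts geometrically by \eqref{contraction}, so the infinite-order flatness of $f_+$ on $X_+$ drives $f_+^{(n)}(A^{-k}x)$ below any prescribed geometric rate. Choosing the flatness order large enough for each derivative order $n$ balances the two effects and yields a bound $\|(f_+\circ A^{-k})^{(n)}(x)\|\le c_n(S)\,\theta^{k}$ with some $\theta<1$ on every bounded set $S$; the series for $g_-$ is symmetric, with forward iteration and the roles of $A_+,A_-$ interchanged. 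Verifying this decay uniformly over bounded sets — in particular checking that the flatness estimates survive the unbounded excursion of the orbit along $X_+$ (respectively $X_-$) — is the delicate point, and it is exactly what boundedness of the derivatives of $H$ secures.
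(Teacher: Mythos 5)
Your first stage (Borel lemma, reduction to a flat right-hand side $f_0$, decomposition via Lemma~\ref{f-plus-minus}) matches the paper. The gap is in the second stage: the direct series $g_+(x)=\sum_{k\ge1}f_+(A^{-k}x)$ and $g_-(x)=-\sum_{k\ge0}f_-(A^{k}x)$ do not converge on the strength of what Lemma~\ref{f-plus-minus} actually provides. That lemma gives flatness of $f_\pm$ only on $X_\pm\cap U$ for a \emph{bounded} neighborhood $U$ (the set where $H(x)=x$); indeed $f_-=f_0-f_+$ contains the term $f_0-f_0\circ H$, which vanishes only on $U$ and is uncontrolled on $X_\pm$ outside $U$, so your claim that routing through $H$ makes the flatness ``uniform along the subspace directions'' is not justified. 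Meanwhile the orbit $A^{-k}x$ approaches $X_+$ transversally (its $X_-$-component contracts by \eqref{contraction}) but its $X_+$-component expands geometrically, so the orbit leaves every bounded set and in particular leaves $U$; at such points you have neither flatness estimates nor even boundedness of the derivatives of $f_\pm$ (the hypothesis is boundedness on bounded sets only). So the terms $f_+(A^{-k}x)$ need not tend to zero and the series can diverge. The paper's proof avoids exactly this by inserting the blid map \emph{inside} the series, $w_+(x)=-\sum_k f_+(H(A^kx))$, which confines all arguments to $U$ where the flatness is uniform --- but the price is that the telescoping then solves \eqref{flat-plus-minus-eqns} with right-hand side $f_\pm(H(x))$ rather than $f_\pm(x)$, i.e.\ it yields only a \emph{local} solution of \eqref{cohomol-eqn}.

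This is why the second stage you chose to discard is indispensable. Having a local solution $\gamma_0$, the paper substitutes $g=h+\gamma_0$ to get \eqref{flat-eqn-for-global-sol} with a right-hand side $v$ vanishing near zero, splits $v=v_++v_-$ by Lemma~\ref{v-plus-minus} into pieces vanishing on the strips $S_\pm(\epsilon)$, and then the series $h_+(x)=-\sum_k v_+(A^kx)$ and $h_-(x)=\sum_k v_-(A^{-k}x)$ are \emph{locally finite} (for each bounded set $D$ the hyperbolicity pushes $A^kD$ into the relevant strip for all large $k$), so convergence is trivial and no uniform flatness along unbounded subsets is ever needed. To repair your proposal, either reinstate this local-to-global step, or supply a genuinely strengthened version of Lemma~\ref{f-plus-minus} producing flatness with summable estimates along all of $X_\pm$ --- which the current construction does not give.
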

 
\begin{proof}First, we will show that a formal solvability implies a local one, i.e., implies existence of (a global) $C^\infty$-function $g$ such that \eqref{cohomol-eqn} holds in a neighborhood of zero. 

Let $Q_n(x)$ be continuous solutions of~\eqref{n-eqns}. We will build a $C^\infty$-function $g_0(x)$, following the Borel lemma. The substitution $g=w+g_0$ reduces \eqref{cohomol-eqn} to the equation
 \begin{equation}\label{flat-eqn}
    w(Ax)-w(x)=f_0(x),
 \end{equation}
where $f_0(x)=f(x)-g_0(Ax)+g_0(x)$ is flat at zero. Lemma~\ref{f-plus-minus} provides the decomposition
$$f_0(x)=f_+(x)+f_-(x)$$ 
together with a neighborhood $U$.
Let $H$ be a blid map. One can assume that $H(X)\subset U$. Consider the pair of equations:
\begin{equation}\label{flat-plus-minus-eqns}
\begin{array}{l}
            w_+(Ax)-w_+(x)=f_+(H(x)),\\
            w_-(Ax)-w_-(x)= f_-(H(x)).
\end{array}
\end{equation}
Then, the estimates \eqref{contraction}  imply that the series 
$$\
\begin{array}{l}
   w_+(x)=-\sum_0^\infty f_+(H(A^kx)) \mbox{,  }
  w_-(x)=\sum_1^\infty f_-(H(A^{-k}x))
\end{array}
$$
converge since $f_{\pm}$ is flat on $X_{\pm}$. The series present solutions to the first and second equations \eqref{flat-plus-minus-eqns} correspondingly.
 
 The (global) function $w_1(x)=w_-(x)+w_+(x)$ satisfies \eqref{flat-eqn} in
a neighborhood of the origin, i.e. it is local solution for this equation. In turn, the function $g=w_1+g_0$ is a
local solution of the initial equation \eqref{cohomol-eqn}.
 
Now we will prove that local solvability implies a
global one. Let $\gamma_0$ be a local solution. The substitution $g=h+\gamma_0$
reduces \eqref{cohomol-eqn} to the equation
 \begin{equation}\label{flat-eqn-for-global-sol}
   h(Ax)-h(x)=v(x)
\end{equation}
where $v(x)=f(x)-\gamma(Ax)+\gamma(x)$ vanishes in a neighborhood of zero.
Let $v=v_++v_-$ be a decomposition described by Lemma~\ref{v-plus-minus}. Consider the series 
\begin{equation}\label{series-global-sol}
    h_+(x)=-\sum_0^\infty v_+(A^kx).
\end{equation}
Since $||A_+||<1$, for any bounded set $D$ there is a number $k_0(D)$
such that $A^k(D)\subset S_+(\epsilon)$ for $k>k_0(D)$. Hence, if $k> k_0(D)$, then $v_+(A^kx)=0$ for all $x\in D$, and \eqref{series-global-sol} represents a global smooth function. By the same arguments the series
$$          h_-(x)=\sum_1^\infty v_-(A^{-k}x)$$
is a global smooth function. The sum $h=h_++h_-$ is a smooth global solution of \eqref{flat-eqn-for-global-sol}. This construction completes the proof.
\end{proof}

\section{More examples and open questions}\label{sec-questions}

One of the important questions of extensions of local maps on Banach spaces is the following. Do Banach spaces without smooth blid maps  exist? Recently, affirmative answer was presented in \cite{DH} (also see \cite{HJ}). The authors proved that there exist Banach spaces that do not allow $C^2$-extension (and hence the $C^2$-blid map).
\begin{question}\label{Q-arbitrary-space} For which spaces do smooth blid maps exist? 
Do they exist on $l_p$, with non-even $p$?
\end{question}

Some other open questions related to the generalization of Theorem~\ref{thm-extension} we discuss below.

How can we extend germs of maps defined at a closed subset $S\subset X$? For this construction we need to define smooth blid maps at $S$. More precisely, generalizing the definition of germs at a point, we will say that maps $f_1$ and $f_2$ from neighborhoods $U_1$ and $U_2$ of $S$ into $Y$ are {\it equivalent}, if they coincide in a (smaller) neighborhood of $S$.
Every equivalence class is called a {\it germ at $S$}. We pose the same question. Given a $C^q$-germ at $S$, does there exist a global representative? Assume there exist a $C^q$-map $H:X \to X$ whose image $H(X)$ is contained in a neighborhood  $U$ of $S$
and which is equal to the identity map in a smaller neighborhood. Such maps we call {\it smooth blid maps at S}. Then every local map $f$ defined in $U$ can be extended on the whole $X$.  It suffices to set $F(x)=f(H(x))$.

In the next example, we construct the map $H$ for a segment (in particular, for a ball).
\begin{example}\label{example-segment}
Let $S(A)$ be a set of all functions $x\in C[0,1]$ whose graphs $(t,x(t))$ are
contained in a closed $A\subset\R^2$, where $A$ is chosen in such a way that $S(A)\neq \emptyset$. Let  $h(t,x)$ be a $C^\infty$-function, which is equals to $1$ in a neighborhood of $A$ and vanishes 
outside of a bigger set. Then, for an arbitrary $y\in C[0,1]$
$$
             H_y(x)(t)=y(t)+h(t,x(t))(x(t)-y(t))
$$
is a $C^{\infty}$-blid map for $S(A)$.

If $A=\left\{\{t,x\}: \min(\psi(t),\phi(t))\leq x\leq  \max(\psi(t),\phi(t))\right\}$ for some $\phi,\psi \in C[0,1]$, then $S(A)$ can be thought of as a segment $[\phi,\psi]\subset C[0,1]$. 

In particular, given $z\in C[0,1]$ and a constant $r>0$, setting $\phi =z-r$ and $\psi=z+r$, we obtain the ball $B_r(z)=\{ x: ||x-z||\leq r \}\subset C[1,0]$.

Every $C^q$-germ at $[\phi,\psi]\subset C[0,1]$ contains a global representative. 

Note, this example has an obvious generalization to segments and balls in $C^k[0,1]$.
\end{example}

The Question \ref{Q-arbitrary-space} and Example \ref{example-segment} bring us to the next question. 
\begin{question} For which pairs $(S,X)$ do similar constructions exist? In particular, can a smooth blid map be constructed for any bounded subset $S$ of a space $X$ possessing a smooth blid map? For example, we do not know whether a smooth blid map can be constructed for a sphere $S=\{x\in C[0,1]: ||x||=r\}$. 
\end{question}
\begin{question} The Borel lemma for finite-dimensional spaces is a particular case of the well-known Whitney extension theorem from a closed set $S\subset \R^n$. What is an infinite-dimensional version of the Whitney theorem? 
\end{question}

\bibliographystyle{amsalpha}

\end{document}